\documentclass[12pt]{amsart}

\title{Convexity inequalities for eigenvalues and log-concavity of eigenfunctions}
\author{Paul Bryan}
\address{School of Mathematical and Physical Sciences, Macquarie University, Sydney, NSW, Australia}
\email{paul.bryan@mq.edu.au}
\author{Julie Clutterbuck}
\address{School of Mathematics, Monash University, 9 Rainforest Walk, VIC 3800 Australia}
\email{Julie.Clutterbuck@monash.edu}
\author{Cale Rankin}
\address{School of Science, University of New South Wales Canberra}
\email{c.rankin@unsw.edu.au}
\date{}

\usepackage[a4paper,margin=1.5cm,includehead]{geometry}
\usepackage{parskip}

\usepackage{amsmath}
\usepackage{amssymb}
\usepackage{mathtools}
\usepackage{graphicx}
\usepackage[dvipsnames]{xcolor}
\usepackage[shortlabels]{enumitem}
\usepackage{multicol}
\usepackage{tikz}
\usepackage{xcolor}
\usepackage{environ}
\RequirePackage{amsthm}
\RequirePackage{hyperref}
\RequirePackage{cleveref}

\usepackage[normalem]{ulem}

\newtheorem{thm}{Theorem}[section]
\newtheorem{lem}[thm]{Lemma}

\newtheorem{cor}[thm]{Corollary}

\newenvironment{pf}{\begin{proof}}{\end{proof}}

\theoremstyle{definition}
\newtheorem{defn}[thm]{Definition}

\newtheorem{assume}[thm]{Assumption}

\theoremstyle{remark}
\newtheorem{rem}[thm]{Remark}

\crefname{lem}{Lemma}{Lemmata}
\crefname{prop}{Proposition}{Propositions}
\crefname{thm}{Theorem}{Theorems}
\crefname{cor}{Corollary}{Corollaries}
\crefname{defn}{Definition}{Definitions}
\crefname{example}{Example}{Examples}
\crefname{eg}{Example}{Examples}
\crefname{remark}{Remark}{Remarks}
\crefname{assume}{Assumption}{Assumptions}

\usepackage[backend=bibtex]{biblatex}
\begin{document}

\maketitle
\begin{abstract}
  We give simple new proofs of two well-known results for the Schr\"odinger operator: first, the Brunn--Minkowski inequality for Dirichlet eigenvalues and, second, the log-concavity of the first Dirichlet eigenfunction. Our proof of the first applies to a class of domains including $C^{1,1}$ connected domains and convex potentials. In the special case of convex domains, the second result is a simple corollary.
\end{abstract}

\section{Introduction}
\label{sec:org46972ca} Let \(\Omega_0, \Omega_1 \subseteq \mathbf{R}^n\) be bounded domains (connected,
open sets) and let $V : \overline{\Omega_0 \cup \Omega_1} \to \mathbb{R}$ be a potential. The first
Dirichlet eigenvalue for the Schr\"odinger operator $-\Delta + V$, which we denote by \(\lambda_i =
\lambda(\Omega_i)\), \(i=0,1\) is given by the Rayleigh quotient,
\[ \lambda_i = \inf \left\{\frac{\int_{\Omega_i} |\nabla u|^2 + Vu^2dx}{\int_{\Omega_i} u^2 dx} : u \in
W^{1,2}_{0} (\Omega_i \to \mathbf{R}) \backslash \{0\}\right\}.
\]

\begin{assume}
\label{regularity-assumption}

We assume that $\Omega_0, \Omega_1$ and $V$ are sufficiently regular so that the first eigenfunctions $\lambda_1, \lambda_2$ are $C^1$ up to the boundary. For example, domains with $C^{1,1}$ boundary combined with potentials $V \in C^\alpha(\overline{\Omega_0 \cup \Omega_1})$ satisfy this requirement \cite[Theorem 9.15]{MR1814364}. We also assume that $V$ extends to a convex function on the convex hull of $\Omega_0 \cup \Omega_1$
\end{assume}

For \(t \in [0, 1]\), the deformation of $\Omega_0$ into $\Omega_1$ by Minkowski sums is denoted by
\begin{equation}
  \label{eq:mink-sum-def} \Omega_t = (1-t)\Omega_0 + t \Omega_1 := \{ (1-t)x_0+tx_1 ; x \in \Omega_0
\text{ and } x_1 \in \Omega_1\}.
\end{equation}

Our goal in this paper is to give simple new proofs of the following theorems originally due to
Brascamp and Lieb \cite{zbMATH03522267}.

\begin{thm}[{\textbf{Brunn-Minkowski inequality for
eigenvalues} \cite[Theorem 6.2]{zbMATH03522267}}]
  \label{org5c5abe2} Let $\Omega_0,\Omega_1$ and $V$ satisfy Assumption \ref{regularity-assumption}. Let \(\lambda_t = \lambda(\Omega_t)\) be the
first Dirichlet eigenvalue of \(\Omega_t\). Then $\lambda_t \leq (1-t)\lambda_0 + t \lambda_1$.
\end{thm}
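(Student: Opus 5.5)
The plan is to build an explicit positive test function on $\Omega_t$ out of the first eigenfunctions $u_0,u_1$ of $\Omega_0,\Omega_1$, show that it is a subsolution with eigenvalue parameter $\bar\lambda := (1-t)\lambda_0 + t\lambda_1$, and conclude from the Rayleigh quotient that $\lambda_t \le \bar\lambda$.

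\textbf{Log transform.} Normalise $u_i>0$ in $\Omega_i$ and set $w_i = -\log u_i$, so that $w_i = +\infty$ on $\partial\Omega_i$. From $-\Delta u_i + V u_i = \lambda_i u_i$ and $\Delta u = (|\nabla w|^2 - \Delta w)u$ we get
\[
\Delta w_i - |\nabla w_i|^2 + V = \lambda_i \quad \text{in } \Omega_i .
\]

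\textbf{Inf-convolution.} On $\Omega_t$ define
\[
w_t(z) = \inf\{(1-t)w_0(x) + t w_1(y) : x\in\Omega_0,\ y\in\Omega_1,\ (1-t)x+ty=z\},
\]
and set $u_t = e^{-w_t}$. This is the Prékopa--Leindler type geometric-mean sup-convolution $u_t(z)=\sup u_0(x)^{1-t}u_1(y)^t$. Because $w_i\to+\infty$ at $\partial\Omega_i$, the infimum for $z\in\Omega_t$ is attained at an interior pair $(x,y)$. Also $u_t$ is positive in $\Omega_t$, continuous, and tends to $0$ at $\partial\Omega_t$; here Assumption \ref{regularity-assumption} (the $C^1$ eigenfunctions up to the boundary, together with the Hopf lemma) gives control of $u_i$ near the boundary.

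\textbf{Key step: viscosity inequality.} I will show that $w_t$ satisfies
\[
\Delta w_t - |\nabla w_t|^2 + V \le \bar\lambda \quad \text{in } \Omega_t
\]
in the viscosity sense. Let $\varphi$ be smooth and touch $w_t$ from below at $z$, with minimising pair $(x,y)$. For all $\xi,\eta$ near $0$,
\[
\varphi(z+(1-t)\xi+t\eta) \le (1-t)w_0(x+\xi) + t\,w_1(y+\eta),
\]
with equality at $\xi=\eta=0$.
\begin{itemize}
\item Taking first variations gives $\nabla\varphi(z) = \nabla w_0(x) = \nabla w_1(y) =: p$.
\item Taking $\xi=\eta$ and second variations gives $D^2\varphi(z) \le (1-t)D^2w_0(x) + t D^2 w_1(y)$, hence $\Delta\varphi(z) \le (1-t)\Delta w_0(x) + t\Delta w_1(y)$.
\item Convexity of $V$ gives $V(z) \le (1-t)V(x) + tV(y)$.
\end{itemize}
Adding these up, and using that the $|p|^2$ terms agree,
\[
\Delta\varphi - |\nabla\varphi|^2 + V(z) \le (1-t)\lambda_0 + t\lambda_1 = \bar\lambda .
\]
Translating back, $u_t$ is a positive viscosity subsolution: $-\Delta u_t + V u_t \le \bar\lambda u_t$ in $\Omega_t$, with $u_t = 0$ on $\partial\Omega_t$.

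\textbf{Main obstacle: passing to the eigenvalue.} The hardest part is converting the viscosity inequality into the Rayleigh quotient bound, namely showing $u_t \in W^{1,2}_0(\Omega_t)$ and
\[
\int |\nabla u_t|^2 + V u_t^2 \le \bar\lambda \int u_t^2 .
\]
I would first try to use that $w_t$ is locally semiconcave: as an infimum of $C^2$ functions it is locally touched from above by paraboloids. Then $\Delta w_t$ has a nonpositive singular part, so the viscosity inequality upgrades to a distributional one, $\Delta w_t \le |\nabla w_t|^2 - V + \bar\lambda$. Next I would test against $u_t^2\eta$ with cutoffs $\eta$ exhausting $\Omega_t$, and pass to the limit using the boundary decay of $u_t$ inherited from the Lipschitz bounds on $u_i$.

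If that route gets messy, the fallback is a Barta-type or comparison argument. Suppose $\lambda_t > \bar\lambda$, and compare the positive subsolution $u_t$ with the first eigenfunction of $\Omega_t$ via the maximum principle for $-\Delta + V - \bar\lambda$, which is positive on $\Omega_t$; this forces $u_t \le 0$, a contradiction.
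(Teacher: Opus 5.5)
Your proposal is correct and is essentially the paper's proof in logarithmic variables. Your $u_t=e^{-w_t}$ is exactly the sup-convolution $\bar u$, and your first and second variations at an interior optimal pair, combined with convexity of $V$, reproduce the paper's first-order condition and $f''(0)$ computation. Likewise, semiconcavity of $w_t$ (equivalently, local semiconvexity of $\bar u$) is what gives the singular part of the Hessian the right sign when integrating by parts.

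The one technical difference is the last step. You test a distributional inequality for $w_t$ against $u_t^2\eta$ with cutoffs $\eta$. For this you only need the decay $u_t(z)\le L\,\mathrm{dist}(z,\partial\Omega_t)$, which follows from $u_i\le L\,\mathrm{dist}(\cdot,\partial\Omega_i)$ together with $B\bigl(z,(1-t)d_0+t\,d_1\bigr)\subseteq\Omega_t$ and AM--GM, where $d_0,d_1$ are the distances from $x,y$ to $\partial\Omega_0,\partial\Omega_1$. The paper instead proves a mollification/Fatou integration-by-parts inequality (Lemma \ref{orgc51eab8}) and a global Lipschitz bound on $\bar u$.
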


As an immediate corollary of our proof we obtain:

\begin{thm}[{\textbf{Log-concavity of first eigenfunction} \cite[Theorem 6.1]{zbMATH03522267}}]
  \label{thm:log-concave} Let $\Omega$ be a convex domain and $V:\Omega \rightarrow \mathbf{R}$ be
convex. Then the first Dirichlet eigenfunction is log-concave.
\end{thm}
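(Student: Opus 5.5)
The statement is Theorem~\ref{thm:log-concave}, but the paper presents it as a corollary of the proof of Theorem~\ref{org5c5abe2}. So the plan is to extract from that proof a \emph{two-point} (Pr\'ekopa--Leindler type) inequality for eigenfunctions. Let $u_i>0$ be the first eigenfunctions of $\Omega_i$, normalised suitably, and let $u_t$ be the first eigenfunction of $\Omega_t$. The natural test function on $\Omega_t$ is the sup-convolution
\[
w_t(z)=\sup\left\{u_0(x_0)^{1-t}u_1(x_1)^t : z=(1-t)x_0+tx_1,\ x_i\in\Omega_i\right\}.
\]
I expect the Brunn--Minkowski argument to show that $w_t$ is a subsolution, or equivalently that it compares with $u_t$ via a maximum-principle argument. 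This rests on convexity of $V$: at an interior maximiser one has $\nabla \log u_0(x_0)=\nabla\log u_1(x_1)$, and
\[
V((1-t)x_0+tx_1)\le (1-t)V(x_0)+tV(x_1).
\]
Together these give $(-\Delta+V)w_t\le \big((1-t)\lambda_0+t\lambda_1\big)w_t$ in the viscosity sense. The $C^1$ up to the boundary regularity from Assumption~\ref{regularity-assumption} is used so that $w_t$ vanishes on $\partial\Omega_t$ and the maximiser stays away from the boundary.

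Specialise to $\Omega_0=\Omega_1=\Omega$ convex, so that $\Omega_t=\Omega$ and $\lambda_t=\lambda_0=\lambda_1=\lambda$. Take $u_0=u_1=u$, the positive first eigenfunction normalised by $\max u=1$. Then $w_t\ge u$ trivially, by taking $x_0=x_1=z$. We also have $(-\Delta+V-\lambda)w_t\le 0$ with $w_t=0$ on $\partial\Omega$. Since $\lambda$ is the first eigenvalue, the only such subsolutions are multiples of $u$: pair $w_t$ against $u$, or use the Hopf/strong maximum principle comparison on $w_t/u$, which is legitimate because $u$ is $C^1$ up to the boundary with nonvanishing normal derivative. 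Hence $w_t=cu$ for some constant $c$. Comparing maxima gives $c=1$, since $\max w_t=\max u=1$. Therefore
\[
u((1-t)x_0+tx_1)\ge u(x_0)^{1-t}u(x_1)^t,
\]
which is exactly log-concavity of $u$.

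The main obstacle is the rigidity step. We need that a nonnegative subsolution of $-\Delta+V-\lambda$ vanishing on the boundary must be a multiple of $u$, and $w_t$ is only semicontinuous or Lipschitz and a viscosity subsolution. I would handle this by a sliding argument. Let $c^*=\sup_\Omega w_t/u$, which is finite because $w_t\le C\,\mathrm{dist}(\cdot,\partial\Omega)$ by the $C^1$ bounds and $u\ge c\,\mathrm{dist}$ by Hopf. Then $c^*u-w_t\ge0$ is a viscosity supersolution that touches zero somewhere, either in the interior or in the sense of normal derivatives at the boundary. The strong maximum principle and the Hopf lemma for viscosity supersolutions then force $c^*u\equiv w_t$.

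Convexity of $\Omega$ enters only to ensure $\Omega_t=\Omega$. Convexity of $V$ enters only through the potential inequality displayed above.
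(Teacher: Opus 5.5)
Your proposal is correct and has the same skeleton as the paper (your $w_t$ is the paper's $\bar u$): take $\Omega_0=\Omega_1=\Omega$, note $\bar u\ge u$, prove $\bar u=cu$, and fix $c=1$ by comparing maxima. Where you differ is the mechanism that gives the rigidity $\bar u=cu$.

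The paper argues variationally. It combines the inequality $\bar u\,\Delta\bar u\ge (V-\lambda)\bar u^2$ at Alexandrov points, the fact that $\bar u\in W^{1,2}_0$ (Lemma \ref{org7214da8}), and the integration-by-parts inequality for semiconvex functions (Lemma \ref{orgc51eab8}). Together these bound the Rayleigh quotient of $\bar u$ by $\lambda$, so $\bar u$ is a minimiser, and simplicity of the principal eigenvalue does the rest.

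You argue by maximum principle instead. At every $z$, with $(x_0,x_1)$ optimal, the map $y\mapsto u(x_0+y-z)^{1-t}u(x_1+y-z)^t$ is a $C^2$ lower barrier for $\bar u$. Its Laplacian at $z$ is at least $(V(z)-\lambda)\bar u(z)$, because \eqref{eq:first-deriv} kills the cross term $-t(1-t)\bigl|\nabla\log u(x_0)-\nabla\log u(x_1)\bigr|^2$. Any upper test function has larger Hessian than this barrier, so $\bar u$ is a viscosity subsolution of $-\Delta w+(V-\lambda)w\le 0$ at every point. Sliding on $c^*=\sup\bar u/u$, with the strong maximum principle and Hopf lemma applied to the nonnegative supersolution $c^*u-\bar u$, then finishes. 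The indefinite sign of $V-\lambda$ is harmless because that function is nonnegative.

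What your route buys is that it dispenses with the semiconvexity machinery of Section \ref{sec:org184251d}: no Alexandrov derivatives, and no Lemmas \ref{lem:lim-mollified-2deriv} or \ref{orgc51eab8}. Your differential inequality holds everywhere rather than almost everywhere.

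What it costs is viscosity maximum principles and extra boundary regularity:
\begin{itemize}
\item Your proof that $c^*<\infty$ uses $u\ge c\,\mathrm{dist}(\cdot,\partial\Omega)$.
\item If $\sup\bar u/u$ is approached only along a sequence tending (possibly tangentially) to $\partial\Omega$, a pointwise Hopf lemma is not enough. You need the uniform estimate $c^*u-\bar u\ge\epsilon\,\mathrm{dist}(\cdot,\partial\Omega)$ near the boundary.
\end{itemize}
Both need a uniform interior ball condition, e.g.\ $C^{1,1}$ boundary. Both fail at a corner of a convex polygon, where $\nabla u$ vanishes. By contrast, on the boundary side the paper's argument only uses that $\bar u$ is Lipschitz and vanishes on $\partial\Omega$.

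Your fallback of pairing $\bar u$ against $u$ is, once justified for the semiconvex $\bar u$, essentially the paper's argument.
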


Our proofs rely on
Colesanti's \cite{zbMATH02181983} idea of using the sup-convolution. Recall for functions \(u_i : \Omega_i \to \mathbf{R}\), the sup-convolution defined for \(z \in \Omega_t\) is
\begin{equation}
  \label{eq:sup-conv-def} \overline{u}(z) =
\sup\{u_0^{1-t}(x_0)u_1^t(x_1) : z = (1-t)x_0 + tx_1, \> x_i \in \Omega_i\}.
\end{equation}

However, where Colesanti's proof requires convexity of domains and log-concavity
of eigenfunctions, our proof works for connected domains with first eigenfunction $C^1$ up to the boundary, and
\textit{establishes} the log-concavity of eigenfunctions as a simple
corollary. On the other hand, Colesanti's result established, for the first
time, conditions for equality --- something not captured by our proof. Moreover, our results easily recover Theorem \ref{org5c5abe2} for convex $\Omega_i$ via approximation since if a sequence of convex domains $\Omega_k$ converges
to a convex domain $\Omega$ in Hausdorff distance then $\lambda(\Omega_k) \rightarrow \lambda(\Omega)$. Recently, Brunn--Minkowski inequalities for eigenvalues of elliptic operators have been considered by a number of authors \cite{MR2141854,MR2673741,MR4024953} and interesting new proofs have been provided for some of Brascamp \& Lieb's other results   \cite{MR4958493}.

Our plan is to prove  Theorems \ref{org5c5abe2} and \ref{thm:log-concave} in Section \ref{sec:org443bc0d} taking for granted some general regularity properties of sup-convolutions proved in Section \ref{sec:org184251d}. This allows the reader to  focus on the main idea of the proof.

\section{Proof of Theorems \ref{org5c5abe2} and \ref{thm:log-concave}}
\label{sec:org443bc0d}

\begin{pf}[Proof. (Theorem \ref{org5c5abe2})] For $i=0,1$ let \(u_i : \Omega_i \to \mathbf{R}\)
denote the positive first eigenfunctions so that \(-\Delta u_i + V u_i = \lambda_i u_i\) with $u_i|_{\partial \Omega_i} = 0$ . Then the sup-convolution \(\overline{u} : \Omega_t \to \mathbf{R}\)  given by \eqref{eq:sup-conv-def} is locally semiconvex and globally Lipschitz on $\Omega_t$ (Lemmas \ref{org15f8af8}, \ref{org7214da8}).

Fix $z \in \Omega_t$. The supremum  \eqref{eq:sup-conv-def} is attained at $(x_0,x_1) \in \Omega_0 \times \Omega_1$ (Lemma \ref{org2260c5a}).  Thus
the function $h(\xi) := u_0(x_0 + t\xi)^{1-t}u_1(x_1-(1-t)\xi)^t$ has a maximum at $\xi
= 0$, whereby
  \begin{equation}
    \label{eq:first-deriv} \frac{\nabla u_0}{u_0}(x_0) = \frac{\nabla u_1}{u_1}(x_1).
  \end{equation} By semiconvexity, for Lebesgue a.e. $z \in \Omega_t$, $\bar{u}$ has
Alexandrov second derivatives satisfying
\[ D_{ii}\overline{u}(z) = \lim_{\tau \rightarrow 0}\frac{\overline{u}(z+\tau e_i) +
\overline{u}(z-\tau e_i) - 2\overline{u}(z)}{\tau^2}.
\] Using that the supremum defining $\bar{u}(z)$ is attained at $(x_0,x_1) \in \Omega_0
\times \Omega_1$, we have
  \begin{align*} D_{ii}\overline{u}(z) &\geq \lim_{\tau \rightarrow 0} \tau^{-2}\Big[ u_0(x_0+\tau
e_i)^{1-t}u_1(x_1+\tau e_i)^t + u_0(x_0-\tau e_i)^{1-t}u_1(x_1-\tau e_i)^t -
2u_0(x_0)^{1-t}u_1(x_1)^t\Big].
  \end{align*} The right hand side is $f''(0)$ for $f(\tau) := u_0(x_0+\tau
e_i)^{1-t}u_1(x_1+\tau e_i)^t$. It's routine to compute
  \begin{align} \nonumber f''(0) &=
\bar{u}(z)\left[(1-t)\frac{D_{ii}u_0}{u_0}(x_0) + t \frac{D_{ii}u_1}{u_1}(x_1) -
t(1-t)\left\vert\frac{D_iu_0}{u_0}(x_0) - \frac{D_{i}u_1}{u_1}(x_1) \right\vert
\right]\\
    \label{eq:to-sub} &= \bar{u}(z)\left[ (1-t)\frac{D_{ii}u_0}{u_0} + t
\frac{D_{ii}u_1}{u_1}\right],
  \end{align} where we've employed \eqref{eq:first-deriv} to obtain
\eqref{eq:to-sub}. Because the $u_i$ are eigenfunctions \eqref{eq:to-sub}
implies
  \begin{align*} \frac{\Delta \bar{u}}{\bar{u}} &\geq (1-t)\frac{\Delta u_0}{u_0} + t \frac{\Delta
u_1}{u_1} \\ &=(1-t)(V(x_0)- \lambda_0) + t (V(x_1)-\lambda_1).
  \end{align*} Subsequently, using the convexity of $V$ to write $V(z) \leq
(1-t)V(x_0)+tV(x_1)$, we obtain at each $z \in \Omega_t$ where $\overline{u}$ is Alexandrov second differentiable
\begin{align*} \bar{u}\Delta \overline{u} &\geq[(1-t)\lambda_0+t\lambda_1]\bar{u}^{2} +
V(z)\bar{u}^2.
\end{align*} The local semiconvexity justifies integration by parts (Lemma
\ref{orgc51eab8}) whereby
  \begin{align*} \int_{\Omega_t} |\nabla \overline{u}|^2 dz &\leq -\int_{\Omega_t} \overline{u} \Delta
\overline{u} dz \\ & \leq \left[(1-t) \lambda_0 + t \lambda_1\right] \int_{\Omega_t} \overline{u}^2 dz
- \int_{\Omega_t}V(z) \overline{u}^2 dz.
  \end{align*} Substituting into the Rayleigh quotient we complete the proof:
  \begin{equation}
    \label{eq:main-fin}
    \lambda_t \leq \frac{\int_{\Omega_t} |\nabla \bar{u}|^2 + V\bar{u}^2dz}{\int_{\Omega_t} \bar{u}^2 dz} \leq
    (1-t)\lambda_0 + t \lambda_1.
  \end{equation}

\end{pf}

The proof of Theorem \ref{thm:log-concave} is an immediate consequence.

\begin{proof}[Proof. (Theorem \ref{thm:log-concave})] Apply the previous proof
with $\Omega_0 = \Omega_1 = \Omega$, so that $u_0 = u_1 = u$ is the first Dirichlet
eigenfunction. The convexity of $\Omega$ implies $\Omega_t = \Omega$. From \eqref{eq:main-fin} $\bar{u}$ minimises the Rayleigh quotient which implies, by uniqueness of principal eigenfunctions up to scaling, that $\bar{u} = C u$ for some $C \in \mathbf{R}$. Evaluate
at a maximum point to obtain $C=1$.  Subsequently $\bar{u} = u$ and the sup-convolution \eqref{eq:sup-conv-def} becomes
  \begin{align*} u(z) = \sup\{u^{1-t}(x_0)u^t(x_1) : z = (1-t)x_0 + tx_1, \> x_i
\in \Omega_i\}.
  \end{align*} Log-concavity, namely that $u$ satisfies
$u\big((1-t)x_0+tx_1\big) \geq u(x_0)^{1-t}u(x_1)^t$, is immediate.
\end{proof}

\begin{rem} Surprisingly, Theorem \ref{org5c5abe2} does not imply convexity of $h(t):= \lambda_t$, which does not hold even for smooth connected open
domains as seen by taking (a smoothing of)
\[ \Omega_0 = \{(x=r\cos \theta, y = r\sin \theta) ; 1 < r < 2 \ , \ \epsilon < \theta < 2\pi\} \subset \mathbf{R}^{2},
\] with $\epsilon>0$ small and $\Omega_1 = B(0,2) \subset \mathbf{R}^{2}$ to obtain discontinuous
$t \mapsto \lambda_t$. This distinction arises because given nonconvex $\Omega_0$, $\Omega_1$ and $t_0,t_1 \in (0,1)$ it is not true that
\[ \{(1-\tau)\Omega_{t_0}+\tau \Omega_{t_1} \}_{\tau \in [0,1]} \subset \{\Omega_t\}_{t \in [t_0,t_1]}\]
Or, more simply, for nonconvex $\Omega$ we may have $(1-t)\Omega + t \Omega \neq \Omega$.
\end{rem}
\begin{rem} The homogeneity of Dirichlet eigenvalues for the Laplacian ($V=0$) means Theorem
 \ref{org5c5abe2} implies $\lambda_t^{-1/2} \geq (1-t)\lambda_0^{-1/2}+t \lambda_1^{-1/2}$ \cite[pg. 128]{zbMATH02181983}, which is usually called the ``Brunn--Minkowski inequality for eigenvalues'' .
\end{rem}

\section{Regularity properties of the sup-convolution}
\label{sec:org184251d} In this section we prove the sup-convolution $\bar{u}$  is locally semiconvex and Lipschitz up to the boundary. Whilst these results are likely obvious to those familiar with semiconvex functions, for the sake of a self-contained paper we give complete proofs. We must take some care since for \(t \in (0, 1)\), \(u_{0}^{1-t}\) and \(u_1^{t}\) may not be Lipschitz up to the boundary as \(u_{i}|_{\partial \Omega_i} = 0\). We must also account for the fact \(\bar{u}\) is defined variationally, hence may not enjoy interior regularity.

\subsection{Minkowski Deformation}
\label{sec:orge2d344d} For open sets $\Omega_0,\Omega_1$ the Minkowski deformations \eqref{eq:mink-sum-def} are open sets which respect closures:
\(\overline{\Omega_t} = (1-t) \overline{\Omega_0} + t \overline{\Omega_1}\). The boundary behaviour is not so well respected.
\begin{lem}
  \label{org44df74f} We have the inclusion
  \[ \partial \Omega_t \subseteq \partial [(1-t)\Omega_0] + \partial [t\Omega_1].
  \]
\end{lem}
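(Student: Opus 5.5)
Take $z\in\partial\Omega_t$. Since the domains are bounded, $\overline{\Omega_t} = (1-t)\overline{\Omega_0}+t\overline{\Omega_1}$, as noted just before the statement. This uses compactness of $\overline{\Omega_0}\times\overline{\Omega_1}$ and continuity of $(x_0,x_1)\mapsto(1-t)x_0+tx_1$: the image of the compact set is closed and contains $\Omega_t$, and it lies in the closure of $\Omega_t$. Since $z\in\overline{\Omega_t}$, we may write $z=(1-t)x_0+tx_1$ with $x_i\in\overline{\Omega_i}$. The task is to show that in fact $x_0\in\partial\Omega_0$ and $x_1\in\partial\Omega_1$, for any such decomposition. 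Then $(1-t)x_0\in\partial[(1-t)\Omega_0]$ and $tx_1\in\partial[t\Omega_1]$, and the inclusion follows.

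The key step is by contradiction. Suppose $x_0\in\Omega_0$, which is open. For $t<1$, the set $(1-t)\Omega_0+tx_1$ is open, since it is a translate of a dilate of an open set, and it contains $z$. We must check it lies in $\Omega_t$. If $x_1\in\Omega_1$ this is immediate. If only $x_1\in\overline{\Omega_1}$, choose $x_1^k\in\Omega_1$ with $x_1^k\to x_1$ and set $x_0^k := x_0 + \frac{t}{1-t}(x_1-x_1^k)$. Then $x_0^k\to x_0$, so $x_0^k\in\Omega_0$ for large $k$, and $(1-t)x_0^k+tx_1^k=z$. Hence $z\in\Omega_t$. This contradicts $z\in\partial\Omega_t$, because $\Omega_t$ is open (it is a union of translates of the open set $(1-t)\Omega_0$) and so is disjoint from its boundary. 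Therefore $x_0\in\partial\Omega_0$. The symmetric argument, valid for $t>0$, gives $x_1\in\partial\Omega_1$.

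The endpoint cases need separate attention. When $t=0$ we have $\Omega_t=\Omega_0$, and $t\Omega_1=\{0\}$, whose boundary is $\{0\}$. The claim then reduces to $\partial\Omega_0\subseteq\partial\Omega_0$. The case $t=1$ is the same with the roles exchanged. I would treat these first in a line, with the convention that $\partial\{0\}=\{0\}$.

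The only mildly delicate points are the perturbation trick for a boundary point $x_1$ and the closure identity. Of these, I expect the perturbation step to be the main place where care is needed. Everything else is elementary point-set topology.
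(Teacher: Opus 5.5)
Your proof is correct and follows the paper's route. You use $\overline{\Omega_t} = (1-t)\overline{\Omega_0} + t\overline{\Omega_1}$ and show that a boundary point cannot have a decomposition with $x_0 \in \Omega_0$ or $x_1 \in \Omega_1$, which is exactly the paper's reduction to $(1-t)\Omega_0 + t\overline{\Omega_1} \subseteq \Omega_t$ and its symmetric counterpart. Your perturbation $x_0^k = x_0 + \frac{t}{1-t}(x_1 - x_1^k)$ supplies the justification the paper omits when it asserts $(1-t)U + tx_1 \subseteq \Omega_t$ for $x_1 \in \partial\Omega_1$ (and $\partial\{0\} = \{0\}$ in $\mathbf{R}^n$ is a fact, not a convention).
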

\begin{pf} Since the Minkowski deformation respects closures, it suffices to
show that
  \begin{align*} (1-t)\Omega_0 + \overline{t\Omega_1} &\subseteq \Omega_t, \\ \overline{(1-t)\Omega_0} +
t\Omega_1 &\subseteq \Omega_t.
  \end{align*}

We will just show the former with the latter being similar.  Let \(z = (1-t)x_0
+ t x_1 \in (1-t)\Omega_0 + \overline{t\Omega_1}\) and let \(U \subseteq (1-t)\Omega_0\) be an open set
containing \(x_0\). Then \((1-t)U + t x_1\) is an open set
containing \(z\) and contained in \(\Omega_t\), hence \(z\) is in the interior of \(\Omega_t\).
\end{pf}

\begin{rem}
The inclusion from Lemma \ref{org44df74f} is in general strict. Let \(\Omega_0 = \Omega_1 = B(0,1)\) and let \(x_0 = (-1, 0) \in \partial
\Omega_0\), \(x_1 = (1, 0) \in \partial \Omega_1\). Then \((1-1/2)x_0 + (1/2)x_1 = (0, 0) \in
\Omega_{1/2}\)
 Note \(\Omega_0\) and \(\Omega_1\) are uniformly
convex. Thus not even uniform convexity can improve the behaviour of boundaries under  deformations.
\end{rem}
\subsection{Admissible And Optimal Points}
\label{sec:org81a28c0}
\begin{defn}
  \label{orgf87f94b} Let \(z \in \Omega_t\) and \((x_0, x_1) \in \overline{\Omega_0} \times \overline{\Omega_1}\). We call \((x_0, x_1)\) \emph{admissible} for \(z\) if \((1-t)x_0 + t x_1 = z\), and \emph{optimal} if in addition equality is attained in the supremum defining the sup-convolution \eqref{eq:sup-conv-def}.
\end{defn}

We denote the set of continuous functions \(\overline{\Omega_i} \to \mathbf{R}\) vanishing on
\(\partial \Omega_i\) by \(C^0_0(\overline{\Omega_i} \to \mathbf{R})\).

\begin{lem}
  \label{org2260c5a} Let \(u_i \in C^0_0(\overline{\Omega_i} \to \mathbf{R})\) satisfy \(u_i > 0\)
on the interior.  Then for any \(z \in \Omega_t\), the set of optimal \((x_0, x_1) \in
\overline{\Omega_0} \times \overline{\Omega}_1\) is non-empty, and contained in the interior
\(\Omega_0 \times \Omega_1\).
\end{lem}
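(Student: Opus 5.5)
Fix $z \in \Omega_t$ and look at the set of admissible pairs
\[
A_z = \{(x_0,x_1) \in \overline{\Omega_0}\times\overline{\Omega_1} : (1-t)x_0 + t x_1 = z\}.
\]
The plan is a compactness argument, followed by a positivity argument that rules out boundary maximisers. The cases $t\in\{0,1\}$ are trivial: for $t=0$ we have $x_0=z$, and since $u_1^0\equiv 1$ any $x_1$ works. So we may assume $t\in(0,1)$.

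First, existence of optimal pairs. The set $A_z$ is closed, as the preimage of a point under a continuous linear map, and it is bounded because $\Omega_0,\Omega_1$ are bounded; hence it is compact. It is also non-empty, since $z\in\Omega_t$ is a combination of interior points. The function $F(x_0,x_1)=u_0(x_0)^{1-t}u_1(x_1)^t$ is continuous on $\overline{\Omega_0}\times\overline{\Omega_1}$, because $u_i\ge 0$ are continuous and $s\mapsto s^{t}$ is continuous on $[0,\infty)$. Therefore $F$ attains its maximum on $A_z$.

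This maximum agrees with the supremum in \eqref{eq:sup-conv-def}, which is taken only over interior pairs. Interior admissible pairs are dense in $A_z$: given $(x_0,x_1)\in A_z$ and an interior admissible pair $(y_0,y_1)$, the segment $(1-s)(x_0,x_1)+s(y_0,y_1)$ stays in $A_z$. One could verify that this segment lies in the interior for $s>0$ when the domains are convex, but for general connected domains that fails. It is easier to bypass the density question: once I show below that every maximiser of $F$ over $A_z$ is interior, the maximum over $A_z$ is attained at an interior pair. Since the interior supremum is at most the maximum over $A_z$, the two coincide, and optimal pairs exist and are interior.

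Second, every maximiser is interior. Because $z\in\Omega_t$, there is an interior admissible pair $(y_0,y_1)\in\Omega_0\times\Omega_1$, and $u_i>0$ on the interior gives $F(y_0,y_1)>0$. So $\max_{A_z}F>0$. If instead $x_0\in\partial\Omega_0$ or $x_1\in\partial\Omega_1$, then $u_0(x_0)=0$ or $u_1(x_1)=0$, and since both exponents $1-t$ and $t$ are positive, $F(x_0,x_1)=0$. Such a pair cannot be a maximiser. Hence every maximiser lies in $\Omega_0\times\Omega_1$, which gives both claims of the lemma.

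The main subtlety is making sure the notion of "optimal" in Definition \ref{orgf87f94b}, where pairs range over the closures, matches the supremum in \eqref{eq:sup-conv-def} over interior points. The positivity argument handles this without needing any density of interior admissible pairs. The other point that needs care is $t\in(0,1)$, where both exponents are strictly positive; this is what makes $F$ vanish at boundary pairs.
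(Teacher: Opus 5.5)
Your argument is correct and is essentially the paper's proof. Compactness of the admissible set and continuity of $u_0^{1-t}u_1^t$ give existence, and comparing the strictly positive value at an interior admissible pair with the zero value at any pair having a boundary component gives interiority. You also make explicit why the maximum over the closures equals the supremum over interior pairs, which the paper leaves implicit.

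The only slip is calling $t\in\{0,1\}$ ``trivial''. By your own remark, at $t=0$ every $x_1\in\partial\Omega_1$ is optimal (taking $0^0=1$), so the interiority claim genuinely needs $t\in(0,1)$. That is the case both you and the paper actually prove.
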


\begin{pf} The supremum is attained on \(\overline{\Omega_0} \times \overline{\Omega_1}\) by
compactness and continuity, so the set of optimal points is non-empty.
For interior \(z \in \Omega_t\), there exists admissible
\((x_0, x_1)\) with \(x_i\) interior.  Then since \(u_i(x_i) > 0\) we have that
\(\overline{u}(z) \geq u_0(x_0)^{1-t} u_1(x_1)^t > 0\). For admissible \((x_0,
x_1)\) with \(x_i \in \partial \Omega_i\) for either \(i = 1\) or \(i = 2\) (or both),
\(u_0(x_0)^{1-t}u_1(x_1)^t = 0 < \overline{u}(z)\) hence \((x_0, x_1)\) is not
optimal.
\end{pf}

\subsection{Regularity of sup-convolution}
\label{sec:org166c3b2}

Now we study the regularity of $\overline{u}$ assuming $u_i \in C^2(\Omega_i) \cap C^{1}(\overline{\Omega_i})$ is positive on $\Omega_i$ and $0$ on $\partial \Omega_i$. In our setting this is assured by Assumption \ref{regularity-assumption} which is satisfied when for, example, $V$ is H\"older and $\Omega$ is $C^{1,1}$ since, in this case, standard  results imply $u_i \in C^{2,\alpha}(\Omega_i) \cap C^{1}(\overline{\Omega_i})$ \cite[Lemma 6.16, Theorem 9.15]{MR1814364}.
\subsubsection{Semi-convexity of sup-convolution}
\label{sec:org33bf5d3}

\begin{defn} Let \(\Omega\) be an open domain. A function \(u : \Omega \to \mathbf{R}\) is
  \emph{locally semi-convex} if for every compact, convex set \(K \subseteq \Omega\),
  there exists a \(\lambda = \lambda(K) \in \mathbf{R}\) such that
  \[ x \in K \mapsto u(x) + \frac{\lambda}{2}|x|^2
  \] is convex. We say \(u\) is \emph{(uniformly) semi-convex} if \(\lambda\) may be
chosen independently of \(K\).
\end{defn}

Note that if \(u \in C^2(\Omega)\), then \(u\) is locally semi-convex. If \(\Omega\) is
bounded and \(u \in C^2(\overline{\Omega})\), then \(u\) is
uniformly semi-convex. From the corresponding result for convex
functions, a supremum over a family of semiconvex functions with fixed $\lambda$ is
semiconvex with constant $\lambda$.

\begin{lem}
  \label{org15f8af8}

Let \(u_i \in C^2(\Omega_i)\). Then the sup-convolution \(\overline{u}\) is locally semi-convex.
\end{lem}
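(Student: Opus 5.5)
Fix a compact convex set $K \subseteq \Omega_t$. The plan is to realize $\overline{u}$ on $K$ as a supremum of $C^2$ functions with a uniform lower bound on their Hessians. Then we apply the remark preceding the lemma: a supremum of a family of semiconvex functions with a common constant $\lambda$ is semiconvex with constant $\lambda$.

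\textbf{Step 1: optimal points stay in a compact set.} By Lemma \ref{org2260c5a}, for each $z \in K$ the supremum is attained at some $(x_0,x_1) \in \Omega_0\times\Omega_1$. We need this uniformly in $z \in K$. Since $\overline{u}$ is positive on $\Omega_t$, we want a positive lower bound $m := \inf_K \overline{u} > 0$. One route is lower semicontinuity: near $z$ one can use admissible points $(x_0 + (w-z), x_1 + (w-z))$ with the same optimal pair shifted, and continuity of $u_i$ gives $\overline{u}(w) \geq \overline{u}(z) - \epsilon$. Hence $\overline{u}$ is lower semicontinuous and positive, so $m>0$ on the compact $K$.

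Now let $(x_0,x_1)$ be optimal for $z\in K$. Then $u_0(x_0)^{1-t}u_1(x_1)^t \geq m$, and with $M = \max(\sup u_0, \sup u_1, 1)$ this forces $u_0(x_0) \geq (m/M^t)^{1/(1-t)}$, and similarly for $u_1$. So $x_i$ lies in the compact set $K_i := \{u_i \geq \delta\} \subset \Omega_i$ for some $\delta>0$ independent of $z$. (For $t\in\{0,1\}$ the lemma is trivial, since then $\overline{u} = u_0$ or $u_1$.)

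\textbf{Step 2: local representation.} Fix $z_0 \in K$ with optimal pair $(x_0,x_1)$. For $w$ near $z_0$, the pair $(x_0+(w-z_0),\, x_1+(w-z_0))$ is admissible for $w$. Define
\[
\phi_{x_0,x_1}(w) = u_0\big(x_0+(w-z_0)\big)^{1-t}\,u_1\big(x_1+(w-z_0)\big)^t ,
\]
which is $\leq \overline{u}(w)$ and equal to it at $w=z_0$. Restrict to shifts $|w-z_0|\leq r$, with $r$ chosen so that the $r$-neighbourhoods $K_i^r$ of $K_i$ remain compact in $\Omega_i$. On $K_i^r$ we have $u_i \geq \delta' > 0$, and $u_i^{s}$ is $C^2$ with Hessian bounded below by some $-\Lambda$ depending only on $\|u_i\|_{C^2(K_i^r)}$ and $\delta'$. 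Hence each $\phi_{x_0,x_1}$ satisfies $D^2\phi \geq -\Lambda I$ on $B(z_0,r)$ with $\Lambda$ uniform over all optimal pairs, by the product and chain rule and the uniform bounds.

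It follows that $\overline{u} = \sup \phi_{x_0,x_1}$ on $B(z',r/2)\cap K$, with the supremum over optimal pairs of points $z'$ in that set. Consequently $\overline{u} + \frac{\Lambda}{2}|x|^2$ is locally convex on $K$.

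\textbf{Step 3: local to global on $K$.} A function on the convex set $K$ that is convex on each small ball is convex on $K$, since convexity along segments is a local property. This gives semiconvexity on $K$ with constant $\Lambda$.

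The main obstacle is Step 1: keeping optimal points uniformly away from $\partial\Omega_i$. This is exactly what the uniform Hessian bound requires, since $u_i^{s}$ degenerates at the boundary.
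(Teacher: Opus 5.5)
Your proof is correct and has the same skeleton as the paper's. A positive lower bound for $\overline{u}$ on $K$ confines the relevant points to superlevel sets of $u_i$ compactly contained in $\Omega_i$. Then $\overline{u}|_K$ is written as a supremum of functions with a common semiconvexity constant.

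Where you differ is in the choice of family:

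\begin{itemize}
\item The paper fixes $x_0$ and lets $x_1 = z/t - \frac{1-t}{t}x_0$ vary affinely with $z$, so only the factor $\tilde{u}_1^t$ depends on $z$. It extends $u_1$ by the constant $(\delta/M)^{1/t}$ off $\tilde{\Omega}_1$, so every member is defined on all of $K$. This gives a global representation in one stroke. The price is that the members are only semiconvex (essentially $\max(u_1,c)^t$ composed with an affine map), and one must check that the spurious terms do not exceed $\overline{u}$.
\item You translate an optimal pair rigidly to $(x_0+h,x_1+h)$. Your comparison functions are therefore genuinely $C^2$, with an explicit Hessian bound from the product and chain rules. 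They are exactly the functions the main proof uses to bound $D_{ii}\overline{u}$ from below. The cost is that your representation only holds on balls of a uniform radius. That forces the extra local-to-global step for convexity along segments, which is standard.
\end{itemize}

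Your Step 1 also gives the lower semicontinuity argument behind $\inf_K \overline{u} > 0$, which the paper asserts without proof. Your threshold $(m/M^t)^{1/(1-t)}$ also avoids the paper's tacit normalization $M \geq 1$, which its estimate of the discarded terms needs.
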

\begin{proof} Fix any convex $K \subset\joinrel\subset \Omega_t$ and let $\delta = \inf_{z \in
K}\overline{u}(z) > 0$. Set $M = \max{(\Vert u_0 \Vert_{L^\infty(\Omega_0)},\Vert u_1
\Vert_{L^\infty(\Omega_1)})}$ and $\tilde{\Omega}_0 = \{x \in \Omega_0 : u_{0}(x) \geq (\delta/M)^{\frac{1}{1-t}}\}$,
$\tilde{\Omega}_1= \{x \in \Omega_1 : u_{1}(x) \geq (\delta/M)^{\frac{1}{t}}\}$. If $x_0 \in \Omega_0 \setminus \tilde{\Omega}_0$ or
$x_1 \in \Omega_1 \setminus \tilde{\Omega}_1$ then $u_0(x_{0})^{1-t}u_1(x_{1})^{t} < \delta$, which implies
for all $z \in K$
  \[\overline{u}(z) = \sup\{u_0^{1-t}(x_0)u_1^t(x_1) : z = (1-t)x_0 + tx_1, \>
x_i \in \tilde{\Omega_i}\}.  \]

  Let $\tilde{u}_1$ denote the extension of $u_1$ as $(\delta/M)^{1/t}$ on
$\tilde{\Omega}_1^c$. Then with \((x_0, x_1)\) admissible for
\(z\) we have \(x_1 = \frac{t-1}{t} x_0 + \frac{z}{t}\), hence we may write
  \[ \overline{u}(z) =
\sup\left\{u_0^{1-t}(x_0)\tilde{u_1}^t\left(\frac{z}{t}-\frac{1-t}{t}x_0 \right)
: x_0 \in \tilde{\Omega}_0\right\}.\] As a function of \(z\), the term in the sup is semi-convex with constant \(\lambda\) independent of \(x_{0}\). Having expressed $\bar{u}|_{K}$ as a supremum of semiconvex functions, $\bar{u}|_{K}$ is semiconvex.
\end{proof}

\begin{cor}
  \label{org60a33f1} Let\(u_i \in C^2(\Omega_i)\). Then the sup-convolution \(\overline{u}\) is locally Lipschitz
and Alexandrov twice differentiable almost everywhere. That is, there exists
measurable functions \(\nabla \overline{u} : \Omega_t \to \mathbf{R}^n\) and \(D^2 \overline{u} : \Omega_t
\to S_n\) (where \(S_n\) denotes \(n\times n\) nonnegative definite matrices) such that for almost
every \(z_0 \in \Omega_t\)
  \[ \overline{u}(z) = \overline{u}(z_0) + \langle \nabla \overline{u} (z_0), z-z_0\rangle +
\frac{1}{2} (z-z_0)^T D^2 \overline{u} (z_0) (z-z_0) + o (|z-z_0|^2)
  \] as \(z \to z_0\).
\end{cor}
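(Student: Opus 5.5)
The corollary follows by reducing to convex functions, using Lemma \ref{org15f8af8} together with the classical regularity theory of convex functions.

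Fix $z_0 \in \Omega_t$ and choose a closed ball $K = \overline{B(z_0,r)} \subset\joinrel\subset \Omega_t$. This set is compact and convex. By Lemma \ref{org15f8af8} there is $\lambda = \lambda(K)$ such that
\[ v(z) := \overline{u}(z) + \frac{\lambda}{2}|z|^2 \]
is convex on $K$. The function $v$ is finite on $K$, since $0 < \overline{u} \le M$.

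\textbf{Local Lipschitz property.} A finite convex function is Lipschitz on any compact subset of the interior of its domain. On $\overline{B(z_0,r/2)}$ the Lipschitz constant is bounded by $\frac{2}{r}\big(\sup_{K} v - \inf_{K} v\big)$, via the standard slope comparison along lines. The quadratic term is smooth, so $\overline{u} = v - \frac{\lambda}{2}|z|^2$ is Lipschitz on $B(z_0,r/2)$. Covering $\Omega_t$ by countably many such balls gives local Lipschitz continuity. By Rademacher's theorem, $\nabla\overline{u}$ then exists almost everywhere and is measurable, being an a.e. limit of difference quotients.

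\textbf{Second-order expansion.} Apply Alexandrov's theorem to the convex function $v$ on $B(z_0,r/2)$. For a.e. $z$ in this ball there exist $\nabla v(z)$ and a symmetric matrix $D^2 v(z) \ge 0$ such that $v$ admits a second-order Taylor expansion at $z$ with $o(|\cdot|^2)$ remainder. Subtracting the exact expansion of $\frac{\lambda}{2}|z|^2$ gives the expansion stated in the corollary, with
\[ \nabla\overline{u} = \nabla v - \lambda z, \qquad D^2\overline{u} = D^2 v - \lambda I. \]
Since $\Omega_t$ is a countable union of such balls and a countable union of null sets is null, the expansion holds at almost every $z_0 \in \Omega_t$.

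Measurability of $D^2\overline{u}$ can be arranged in either of two ways. One is to note that $D^2 v$ is the density of the absolutely continuous part of the distributional Hessian measure of $v$, which is measurable. The other is to write it as an a.e. limit of second difference quotients, $\lim_k k^2\big(\overline{u}(z+e/k)+\overline{u}(z-e/k)-2\overline{u}(z)\big)$, over the directions $e_i$ and $e_i+e_j$. This second form is also exactly how the main proof uses $D_{ii}\overline{u}$.

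\textbf{A correction to the statement.} The matrix $D^2\overline{u}$ is symmetric with $D^2\overline{u} \ge -\lambda(K) I$ locally, but it is not in general nonnegative definite. The target space $S_n$ should therefore be read as symmetric matrices.

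\textbf{Main obstacle.} The only non-routine ingredient is Alexandrov's theorem itself, which we quote rather than reprove. The remaining point of care is that $\lambda$ depends on $K$. This is handled by the localization above, since the expansion is a pointwise local statement.
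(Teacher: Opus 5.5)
Your proof is correct and takes the same route as the paper. The paper simply invokes the standard Lipschitz and Alexandrov theory for convex functions \cite[Section 6.4]{zbMATH08010281}, applied to $\overline{u}+\frac{\lambda}{2}|z|^2$ via Lemma \ref{org15f8af8}; you make the localization to balls and the measurability of $\nabla\overline{u}$, $D^2\overline{u}$ explicit. Your correction is also right: $D^2\overline{u}$ is only bounded below by $-\lambda(K)I$ and need not be nonnegative definite. For example, when $\Omega_0=\Omega_1$ is convex and $V=0$, the paper's own argument gives $\overline{u}=u$ with $\Delta u=-\lambda u<0$, so $S_n$ should be read as the symmetric matrices.
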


This is standard for convex and hence semi-convex functions. See \cite[Section
6.4]{zbMATH08010281}. It is straightforward to compute the gradient of $\bar{u}$
at points of differentiability.

\begin{cor}\label{eq:deriv-calc} Let $z \in \Omega_t$ and let $(x_0,x_1)$ be optimal
for $z$. If $\overline{u}$ is differentiable at $z$, then
\[ \frac{\nabla \overline{u}}{\overline{u}}(z) = (1-t)\frac{\nabla u_0}{u_0}(x_0) + t
\frac{\nabla u_1}{u_1}(x_1) = \frac{\nabla u_0}{u_0}(x_0) = \frac{\nabla u_1}{u_1}(x_1).
\]
\end{cor}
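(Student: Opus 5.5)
The plan is a touching-from-below argument. Fix $z\in\Omega_t$ where $\overline{u}$ is differentiable, and fix an optimal pair $(x_0,x_1)$ for $z$. By Lemma \ref{org2260c5a} both points lie in the interior, so $u_0,u_1$ are $C^1$ near $x_0,x_1$ and positive there. For small $\xi\in\mathbf{R}^n$ the pair $(x_0+\xi,x_1+\xi)$ is admissible for $z+\xi$. Hence
\[
g(\xi):=u_0(x_0+\xi)^{1-t}u_1(x_1+\xi)^t\le \overline{u}(z+\xi),
\]
with equality at $\xi=0$. So $\overline{u}-g$ has a local minimum at $0$. Since both functions are differentiable at $0$, their gradients agree:
\[
\nabla\overline{u}(z)=\nabla g(0)=\overline{u}(z)\Big[(1-t)\frac{\nabla u_0}{u_0}(x_0)+t\frac{\nabla u_1}{u_1}(x_1)\Big],
\]
using $g(0)=\overline{u}(z)>0$. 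This gives the first equality.

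For the remaining equalities I use the first-order condition \eqref{eq:first-deriv}, justified directly. Consider the admissible perturbation $(x_0+t\xi,\,x_1-(1-t)\xi)$, which keeps $z$ fixed. The function
\[
h(\xi)=u_0(x_0+t\xi)^{1-t}u_1(x_1-(1-t)\xi)^t
\]
is $C^1$ near $0$ and has a maximum at $\xi=0$, by optimality. Therefore
\[
0=\nabla h(0)=h(0)\,t(1-t)\Big[\frac{\nabla u_0}{u_0}(x_0)-\frac{\nabla u_1}{u_1}(x_1)\Big],
\]
so $\frac{\nabla u_0}{u_0}(x_0)=\frac{\nabla u_1}{u_1}(x_1)$ whenever $t\in(0,1)$. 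Substituting this into the convex combination collapses it to either term, which gives the last two equalities. The endpoint cases $t=0,1$ are trivial, since then $\overline{u}=u_0$ or $\overline{u}=u_1$.

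The only delicate point is the first step. The touching argument yields equality of gradients only because differentiability of $\overline{u}$ at $z$ is assumed; a one-sided comparison of a semiconvex function would otherwise give only a subgradient inclusion. One also needs interiority of the optimal points, so that $g$ and $h$ are defined and $C^1$ near $0$, and this is exactly what Lemma \ref{org2260c5a} supplies.
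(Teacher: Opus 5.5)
Your proof is correct and is essentially the paper's argument. The paper compares $\overline{u}(z\pm he_i)$ with $u_0(x_0\pm he_i)^{1-t}u_1(x_1\pm he_i)^t$ and takes forward and backward difference quotients to get the two opposite inequalities, which is exactly your observation that $\overline{u}(z+\cdot)-g$ has a local minimum at $0$; it then uses \eqref{eq:first-deriv}, derived from the same $h(\xi)$, for the last two equalities.

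The only slip is your aside that $t\in\{0,1\}$ is trivial. At $t=0$ every pair $(z,x_1)$ with $x_1\in\Omega_1$ is optimal, so the final equality $\frac{\nabla u_0}{u_0}(x_0)=\frac{\nabla u_1}{u_1}(x_1)$ actually fails there. The statement, like \eqref{eq:first-deriv}, should simply be read for $t\in(0,1)$.
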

\begin{proof} The latter two equalities follow from the first and
\eqref{eq:first-deriv}. Next, if $z$ is a point of differentiability, then
optimality of $(x_0,x_1)$ implies
  \begin{align*} D_{i}\overline{u}(z) &= \lim_{h \rightarrow
0}\frac{\overline{u}(z+he_i)-\overline{u}(z)}{h}\\ &\geq \lim_{h \rightarrow
0}h^{-1}\left[u_0(x_0+he_i)^{1-t}u_1(x_1+he_i)^t-u_0(x_0)^{1-t}u_1(x_1)^t\right]\\
&= \overline{u}(z)\left[(1-t)\frac{Du_0}{u_0}(x_0) + t
\frac{Du_1}{u_1}(x_1)\right].
  \end{align*} The opposite inequality uses $ D_{i}\overline{u} = \lim_{h \rightarrow0}h^{-1}\big(\overline{u}(z)-\overline{u}(z-he_i)\big)$ and a similar argument.
\end{proof}
From this formula for the derivatives it's straightforward to deduce better
regularity for $\bar{u}$.
\subsubsection{Sobolev Regularity}
\label{sec:orgf5a19af}
\begin{lem}
  \label{org7214da8} Let \(u_i \in C^2(\Omega_i) \cap C^1(\overline{\Omega_i})\). Then the
sup-convolution is in \(W^{1,p}_0\) for every \(p \geq 1\).
\end{lem}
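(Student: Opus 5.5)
The plan is to show that $\overline{u}$ is globally Lipschitz on $\Omega_t$ and extends continuously by zero to $\partial\Omega_t$. Since $\Omega_t$ is bounded, a Lipschitz function vanishing on the boundary lies in $W^{1,\infty}_0 \subset W^{1,p}_0$ for every $p$. Local Lipschitz continuity and a.e.\ differentiability are already supplied by Corollary \ref{org60a33f1}. So the task is to get a uniform bound on $|\nabla \overline{u}|$ at points of differentiability, and to control the boundary behaviour.

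\textbf{Gradient bound.} At a point $z$ of differentiability with optimal pair $(x_0,x_1)$ (Lemma \ref{org2260c5a}), Corollary \ref{eq:deriv-calc} gives
\[
\nabla\overline{u}(z) = \overline{u}(z)\,\frac{\nabla u_0}{u_0}(x_0) = \overline{u}(z)\,\frac{\nabla u_1}{u_1}(x_1).
\]
Write $\overline{u}(z) = u_0(x_0)^{1-t}u_1(x_1)^t$ and use the first expression with weight $1-t$ and the second with weight $t$, multiplicatively:
\[
|\nabla \overline{u}(z)| = \left|\frac{\nabla u_0}{u_0}(x_0)\right|^{1-t}\left|\frac{\nabla u_1}{u_1}(x_1)\right|^{t} u_0(x_0)^{1-t}u_1(x_1)^t = |\nabla u_0(x_0)|^{1-t}|\nabla u_1(x_1)|^t .
\]
Since $u_i \in C^1(\overline{\Omega_i})$, this is bounded by $\max_i \|\nabla u_i\|_{L^\infty(\Omega_i)} =: L$. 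The identity is the key observation: it sidesteps the fact that $u_i^{1-t}$ and $u_i^{t}$ individually fail to be Lipschitz up to the boundary.

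\textbf{From the a.e.\ bound to a Lipschitz constant.} A locally Lipschitz function with $|\nabla\overline{u}|\le L$ a.e.\ is $L$-Lipschitz along every segment contained in $\Omega_t$. To see this, approximate the segment by nearby parallel segments along which $\nabla\overline u$ exists a.e.\ (Fubini), then apply the fundamental theorem of calculus. Since $\Omega_t$ need not be convex, this yields $|\overline{u}(z)-\overline{u}(w)|\le L\, d_{\Omega_t}(z,w)$ in the intrinsic metric. That bound suffices for $W^{1,\infty}(\Omega_t)$, since the weak gradient equals the a.e.\ gradient for locally Lipschitz functions.

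\textbf{Boundary values (main obstacle).} We must show $\overline{u}(z)\to 0$ as $z\to\partial\Omega_t$. Suppose instead $z_k\to z_\infty\in\partial\Omega_t$ with optimal pairs $(x_0^k,x_1^k)$ and $\overline{u}(z_k)\ge\varepsilon>0$. Passing to a subsequence, $x_i^k \to x_i$ with $(1-t)x_0+tx_1=z_\infty$, and by continuity $u_0(x_0)^{1-t}u_1(x_1)^t\ge\varepsilon$. Hence $x_i\in\Omega_i$ for both $i$, so $z_\infty\in\Omega_t$, which is a contradiction. Therefore $\overline{u}$ extends continuously by $0$ to $\overline{\Omega_t}$, and it is uniformly continuous.

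\textbf{Membership in $W^{1,p}_0$.} For $\varepsilon>0$, consider $(\overline{u}-\varepsilon)_+$. By the boundary step it has compact support in $\Omega_t$, and it is Lipschitz with the same a.e.\ gradient bound, so it lies in $W^{1,p}_0(\Omega_t)$. As $\varepsilon\to0$ it converges to $\overline{u}$ in $W^{1,p}$ by dominated convergence, using $|\nabla\overline u|\le L$ and the boundedness of $\Omega_t$. Since $W^{1,p}_0$ is closed, $\overline{u}\in W^{1,p}_0$ for all $p\ge1$. This truncation argument avoids any need for regularity of $\partial\Omega_t$, which Lemma \ref{org44df74f} and the subsequent remark show is poorly behaved.
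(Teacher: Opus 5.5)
Your proof is correct and follows the paper's route. Corollary~\ref{eq:deriv-calc} gives a uniform a.e.\ bound on $|\nabla\overline{u}|$, which is then combined with continuity and zero boundary values; your identity $|\nabla\overline{u}(z)| = |\nabla u_0(x_0)|^{1-t}|\nabla u_1(x_1)|^{t}$ is a tidier substitute for the paper's case split on whether $u_1(x_1)\le u_0(x_0)$.

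Beyond the paper, you prove $\overline{u}\to 0$ at $\partial\Omega_t$ and the $W^{1,p}_0$ membership via $(\overline{u}-\varepsilon)_+$, both of which the paper only asserts. Also, once $\overline{u}$ vanishes on $\partial\Omega_t$, your intrinsic Lipschitz bound upgrades to a Euclidean one: split $[z,w]$ at its first and last boundary crossings $p,q$ to get $|\overline{u}(z)-\overline{u}(w)|\le L(|z-p|+|q-w|)\le L|z-w|$. This recovers the paper's global Lipschitz claim.
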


Here, to say \(u \in C^1(\overline{\Omega})\) is to say that there is a $C^1$ extension of $u$ to an open neighbourhood of $\Omega$.

\begin{pf} Since \(\Omega_t\) is bounded we have the following inclusions
  \[ \operatorname{Lip} \subseteq W^{1,\infty} \subseteq \dots W^{1,p} \subseteq \dots W^{1,1}.
  \]

From Lemma \ref{org60a33f1}, \(\overline{u}\) is locally Lipschitz. We just need
to provide a uniform Lipschitz constant to verify that \(\overline{u} \in
W^{1,p}\) for every \(p \geq 1\). Then since \(\overline{u}\) is continuous and
\(\overline{u} \equiv 0\) on the boundary \(\partial \Omega_t\) we will have \(\overline{u} \in
W^{1,p}_0\).

Since \(u_i\) are $C^1$ up to the boundary, \(|\nabla u_i|\) is uniformly bounded. Since \(\overline{u}\) is locally Lipschitz, it is differentiable almost
everywhere by Rademacher's theorem. At points where \(\overline{u}\) is
differentiable, Corollary \ref{eq:deriv-calc} ensures that
  \[ \frac{\nabla \overline{u}(z)}{u_0^{1-t}(x_0)u_1^t(x_1)} =\frac{\nabla
\overline{u}}{\overline{u}}(z) = \frac{\nabla u_0}{u_0}(x_0) = \frac{\nabla u_1}{u_1}
(x_1).
  \] In the case that \(u_1(x_1) \leq u_0(x_0)\) we have
  \[ |\nabla \overline{u}(z)| = \left|\frac{u_1^t(x_1)}{u_0^t(x_0)}\nabla u_0(x_0)\right|
\leq \left|\nabla u_0(x_0)\right| \leq \left\Vert \nabla u_0\right\Vert_{L^{\infty}}.
  \] Similarly, if \(u_0(x_0) \leq u_1(x_1)\) then $\left|\nabla \overline{u}(z)\right| \leq \left\|\nabla u_1\right\|_{L^{\infty}}$. Thus \(\overline{u}\) is locally Lipschitz with almost-everywhere uniformly
bounded gradient, hence is Lipschitz with Lipschitz constant bounded by \(\max
\{\|\nabla u_0\|_{L^{\infty}}, \|\nabla u_1\|_{L^{\infty}}\}\).
\end{pf}
\subsubsection{BV Gradient}
\label{sec:org919b310}

Lemma \ref{org7214da8} gives first order regularity for \(\overline{u}\). However in general \(\overline{u}\) is not in \(W^{2,1}\). Nevertheless, semiconvexity
gives an integration by parts inequality. The following lemmas, the first of which is well-known but difficult to find in the literature, complete the justification for the proof of Theorem \ref{org5c5abe2}.

\begin{lem}\label{lem:lim-mollified-2deriv}
  Let $\overline{u}:\Omega \rightarrow \mathbf{R}$ be any semiconvex function. For $\epsilon > 0$ set $\overline{u}_\epsilon = \eta_\epsilon \ast \overline{u}$ where $\eta_\epsilon$ is the standard mollifier. If $u$ has Alexandrov second derivatives at $x \in \Omega$, then
  \begin{equation}
    \label{eq:lim-mollified-2deriv}
    \lim_{\epsilon \rightarrow 0} D^2\overline{u}_\epsilon(x) = D^2\overline{u}(x).
  \end{equation}
\end{lem}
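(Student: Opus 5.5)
Since adding $\frac{\lambda}{2}|x|^2$ commutes with mollification up to a constant matrix ($\eta_\epsilon\ast|x|^2 = |x|^2 + c_\epsilon$, whose Hessian is $2I$), I will reduce to the case that $\overline{u}$ is convex near $x$. Fix a small ball $B(x,r)\subset\subset\Omega$ on which $\overline{u}$ is convex; for $\epsilon<r/2$, $\overline{u}_\epsilon(x)$ and its derivatives only involve values of $\overline{u}$ on $B(x,\epsilon)$. Write the Alexandrov expansion
\[
\overline{u}(x+y) = P(y) + R(y), \qquad P(y) = \overline{u}(x) + \langle p, y\rangle + \tfrac12 y^T A y, \qquad R(y) = o(|y|^2).
\]
Since $D^2(\eta_\epsilon\ast P)=A$ exactly, the claim reduces to showing $D^2(\eta_\epsilon \ast R)(x)\to 0$.

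The naive approach differentiates the mollifier twice: $D^2(\eta_\epsilon\ast R)(x) = \int D^2\eta_\epsilon(y)R(-y)\,dy$. Since $|D^2\eta_\epsilon|\lesssim \epsilon^{-n-2}$ on $B(0,\epsilon)$ and $\sup_{B(0,\epsilon)}|R| = o(\epsilon^2)$, we get
\[
\Big|\int D^2\eta_\epsilon(y) R(-y)\,dy\Big| \le C\epsilon^{-n-2}\,\epsilon^n\, o(\epsilon^2) = o(1).
\]
This works precisely because the Alexandrov remainder is $o(|y|^2)$ uniformly, i.e.\ as a sup over the sphere, not merely in an averaged sense. That uniform form is exactly what Corollary \ref{org60a33f1} provides (the $o(|z-z_0|^2)$ as $z\to z_0$).

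The main obstacle is therefore not analytic but a matter of checking definitions. First, the paper's Alexandrov derivative must be the pointwise uniform expansion of Corollary \ref{org60a33f1}. If only the distributional absolutely continuous part were assumed, I would first invoke the standard fact (Alexandrov's theorem; see \cite[Section 6.4]{zbMATH08010281}) that for convex functions the two notions agree a.e. with uniform remainder. Second, the convexity reduction and the use of the local expansion must be compatible, which they are since both are local. I would also note that convexity is not actually needed beyond guaranteeing the expansion, since the estimate above uses only the expansion and the boundedness of $D^2\eta_\epsilon$ scaled appropriately.
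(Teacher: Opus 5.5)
Your proposal is correct and follows essentially the same argument as the paper. Both proofs split off the quadratic Taylor polynomial, whose mollified Hessian is exactly $D^2\overline{u}(x)$, and then bound the remainder term by $\|D^2\eta_\epsilon\|_{L^1}\sup_{B(0,\epsilon)}|R| \le C\epsilon^{-2}\, o(\epsilon^2) = o(1)$. Your preliminary reduction to the convex case is unnecessary, as you yourself observe.
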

\begin{proof}
Using Alexandrov second differentiability
  \begin{align*}
    D_{ij}\overline{u}_{\epsilon}(x) &= \int_{\mathbf{R}^n}D_{ij}\eta_\epsilon(x-y)\overline{u}(y) \, dy\\
                              &= \int_{\mathbf{R}^n}D_{ij}\eta_\epsilon(x-y)\Big[
                                \overline{u}(x) + D\overline{u}(x)\cdot(y-x)\\
    &\quad \quad +\frac{1}{2}D_{kl}\overline{u}(x)(y-x)_{k}(y-x)_{l} + o(|x-y|^2)
                                \Big] \, dy.
  \end{align*}
Note the zeroth and first order terms (i.e.,  $\overline{u}(x)$ and $D\overline{u}(x)\cdot(y-x)$) vanish after integrating by parts on $y$. As do terms $D_{kl}\overline{u}(x)(y-x)_{k}(y-x)_{l}$ for which $\{k,l\} \neq \{i,j\}$. Thus, after integrating by parts we're left with
  \[     D_{ij}\overline{u}_{\epsilon}(x)= D_{ij}\overline{u}(x)\int_{\mathbf{R}^n}\eta_\epsilon(x-y)\, dy + o(\epsilon^2)\int_{\mathbf{R}^{n}} D_{ij}\eta_\epsilon(x-y)\, dy.\]
  The first integral equals 1. For the second, $| \int_{\mathbf{R}^{n}} D_{ij}\eta_\epsilon(x-y)| \leq C\epsilon^{-2}$ allows us to take $\epsilon \rightarrow0$ via the dominated convergence theorem and obtain \eqref{eq:lim-mollified-2deriv}.
\end{proof}

\begin{lem}
  \label{orgc51eab8} Let \(\overline{u} : \in C^0_0(\overline{\Omega})\) be locally semi-convex and globally Lipschitz. Then \(\overline{u}\) satisfies the following integration by parts inequality
  \begin{align}
    \label{eq:ip-ineq} \int_{\Omega_t} |\nabla \overline{u}(x)|^2 dx &\leq -\int_{\Omega_t}
\overline{u}(x) \Delta \overline{u}(x) \, dx,
    \end{align} where $\Delta \overline{u}(x) = \operatorname{Tr} D^{2}\overline{u}(x)$.
\end{lem}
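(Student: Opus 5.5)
We prove \eqref{eq:ip-ineq} by mollifying, integrating by parts against a cutoff, and passing to the limit, using Fatou's lemma for the second-order term.

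\emph{Step 1 (exhaustion and cutoff).} Fix a compact subset $K \subset\joinrel\subset \Omega$ and a cutoff function $\phi \in C_c^\infty(\Omega)$ with $0\le\phi\le 1$. We will eventually let $\phi\uparrow 1$ along an exhaustion of $\Omega$. For $\epsilon$ smaller than $\operatorname{dist}(\operatorname{supp}\phi,\partial\Omega)$, set $\overline{u}_\epsilon=\eta_\epsilon\ast\overline{u}$ as in Lemma \ref{lem:lim-mollified-2deriv}. This function is smooth near $\operatorname{supp}\phi$. Since $\overline{u}$ is Lipschitz, $\overline{u}_\epsilon\to\overline{u}$ uniformly and $\nabla\overline{u}_\epsilon\to\nabla\overline{u}$ a.e.\ with $|\nabla \overline{u}_\epsilon|\le L$, where $L$ is the Lipschitz constant. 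Classical integration by parts gives
\[
\int \phi|\nabla\overline{u}_\epsilon|^2 = -\int \phi\,\overline{u}_\epsilon\Delta\overline{u}_\epsilon - \int \overline{u}_\epsilon\nabla\phi\cdot\nabla\overline{u}_\epsilon .
\]

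\emph{Step 2 (limit $\epsilon\to0$).} The left side and the last term converge by dominated convergence. For the middle term we use semiconvexity. On a convex compact neighbourhood of $\operatorname{supp}\phi$ we have $D^2\overline{u}\ge -\lambda I$ distributionally, hence $\Delta\overline{u}_\epsilon\ge -n\lambda$ there. Since $\overline{u}_\epsilon\ge 0$ (because $\overline{u}\ge0$ in our application; otherwise split into positive and negative parts of $\overline{u}$), the functions $\phi\,\overline{u}_\epsilon(\Delta\overline{u}_\epsilon+n\lambda)$ are nonnegative. By Lemma \ref{lem:lim-mollified-2deriv} and uniform convergence, they converge a.e.\ to $\phi\,\overline{u}(\Delta\overline{u}+n\lambda)$. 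Fatou's lemma then yields
\[
\int\phi\,\overline{u}\Delta\overline{u}\le\liminf_{\epsilon\to0}\int\phi\,\overline{u}_\epsilon\Delta\overline{u}_\epsilon,
\]
and the $n\lambda$ terms pass to the limit trivially. Therefore
\[
\int\phi|\nabla\overline{u}|^2\le-\int\phi\,\overline{u}\Delta\overline{u}-\int\overline{u}\nabla\phi\cdot\nabla\overline{u}.
\]
The inequality, rather than equality, arises exactly because the singular part of the measure $D^2\overline{u}$ is nonnegative and is lost in the limit.

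\emph{Step 3 (removing the cutoff).} This is the main obstacle. The semiconvexity constant may blow up near $\partial\Omega$, so $\Delta\overline{u}$ is not controlled uniformly up to the boundary, and we must handle the cross term $\int\overline{u}\nabla\phi\cdot\nabla\overline{u}$. Take $\phi_k(x)=\psi(k\,d(x))$, where $d(x)=\operatorname{dist}(x,\partial\Omega)$ and $\psi$ is a fixed smooth ramp from $0$ to $1$. Then $|\nabla\phi_k|\le Ck$ on the strip $\{d<2/k\}$. Because $\overline{u}$ is Lipschitz and vanishes on $\partial\Omega$, we have $|\overline{u}|\le L d\le 2L/k$ on this strip. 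Hence the cross term is bounded by $C L^2|\{d<2/k\}|\to0$.

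Finally we pass $k\to\infty$ in the remaining terms. The left side converges by monotone convergence. For $-\int\phi_k\overline{u}\Delta\overline{u}$ we need either integrability of $\overline{u}\Delta\overline{u}$ or a sign. If $\overline{u}\,(\Delta\overline{u})^-$ is integrable, dominated and monotone convergence on the positive and negative parts give the result, interpreting the right side of \eqref{eq:ip-ineq} in the extended sense otherwise. In our application $\overline{u}\Delta\overline{u}\ge(c+V)\overline{u}^2$ is bounded below by an integrable function, so the limit is legitimate and \eqref{eq:ip-ineq} follows.
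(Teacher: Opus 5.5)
Your proof is correct, and its core matches the paper's: mollify, use the local bound $D^2\overline{u}_\epsilon\ge-\lambda I$, and combine Lemma \ref{lem:lim-mollified-2deriv} with Fatou's lemma to get the inequality away from the boundary, then exhaust $\Omega$. Where you differ is the exhaustion step.

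The paper integrates by parts on $C^1$ subdomains $\tilde{\Omega}_t\subset\joinrel\subset\Omega_t$ and bounds the boundary term by $C\sup_{\partial\tilde{\Omega}_t}|\overline{u}|\,\mathcal{H}^{n-1}(\partial\tilde{\Omega}_t)$. That bound vanishes only if the subdomains satisfy $\mathcal{H}^{n-1}(\partial\tilde{\Omega}_t)\cdot\sup_{\partial\tilde{\Omega}_t}\operatorname{dist}(\cdot,\partial\Omega_t)\to0$. This can be arranged, e.g.\ by picking regular level sets of a regularized distance via the coarea formula. The paper does not address it, though, and $\partial\Omega_t$ can be rough.

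Your cutoff $\phi_k=\psi(k\,d)$ turns the boundary term into the volume term $\int\overline{u}\,\nabla\phi_k\cdot\nabla\overline{u}=O(L^2|\{d<2/k\}|)$, using only $|\overline{u}|\le Ld$. So no perimeter control is needed: you average over level sets rather than selecting one. You also make explicit two points the paper uses silently:
\begin{itemize}
\item $\overline{u}\ge0$, needed for $\overline{u}_\epsilon\Delta\overline{u}_\epsilon$ to be bounded below;
\item integrability of $\overline{u}(\Delta\overline{u})^-$, needed to pass $\int\phi_k\overline{u}\Delta\overline{u}$ (or $\int_{\tilde{\Omega}_t}\overline{u}\Delta\overline{u}$) to the limit.
\end{itemize}
Both hold in the application. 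The paper's version is shorter but leaves these steps implicit.

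There are two harmless technicalities. First, $\phi_k$ is only Lipschitz, which Step 1 tolerates. Second, $\operatorname{supp}\phi$ need not have a convex neighbourhood in $\Omega$, so take $\lambda$ from a finite cover by balls.

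One correction: your parenthetical ``otherwise split into positive and negative parts of $\overline{u}$'' is wrong. Nonnegativity is a genuine hypothesis of the lemma, not a convenience. Take $\overline{u}(x)=|x|-1$ on $(-1,1)$, which is convex, $1$-Lipschitz and vanishes at $\pm1$. Its Alexandrov second derivative is $0$ a.e., so the right side of \eqref{eq:ip-ineq} is $0$ while the left side is $2$. The singular part $2\delta_0$ of $D^2\overline{u}$ sits where $\overline{u}<0$, so $\overline{u}_\epsilon\Delta\overline{u}_\epsilon$ has no lower bound there and Fatou's lemma fails. No splitting of $\overline{u}$ repairs this.
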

\begin{proof} Fix $\tilde{\Omega}_t \subset\joinrel\subset \Omega_t$ with \(C^{1}\) boundary and $\lambda$ such that $\overline{u}+\lambda|x|^2/2$ is convex on every convex subset of $\tilde{\Omega}_t$. Let $\overline{u}_{\epsilon} = \eta_\epsilon \ast \overline{u}$ where $\eta_\epsilon$ is the standard mollifier. Because the mollification of a convex function is a convex function \cite[Proof of Theorem 6.7]{zbMATH08010281}, and this is a local statement,
  \begin{equation}
    \label{eq:moll-2deriv-bound-below}
    D^2u_\epsilon \geq -\lambda I.
  \end{equation}

  Integrating by parts,
  \begin{align}
    \label{eq:mollified-ip} \int_{\tilde{\Omega}_t}|\nabla \overline{u}_{\epsilon}|^2 \, dx = \int_{\partial
\tilde{\Omega}_t}\overline{u}_{\epsilon} \nabla\overline{u}_{\epsilon} \cdot \mathbf{n} \, dS -
\int_{\tilde{\Omega}_t}\overline{u}_{\epsilon} \Delta \overline{u}_{\epsilon} \, dx.
  \end{align} We have
  \begin{align}
    \label{eq:1st-deriv-ests} \left|\int_{\partial \tilde{\Omega}_t}\overline{u}_{\epsilon}
\nabla\overline{u}_{\epsilon} \cdot \mathbf{n} \, dS\right| \leq C \sup_{\partial
\tilde{\Omega}_t}|\overline{u}_{\epsilon}| \mathcal{H}^{n-1}(\partial \tilde{\Omega}_t),
  \end{align} where $C$ is the Lipschitz constant of $\overline{u}$ (preserved
under mollification). Moreover by \eqref{eq:moll-2deriv-bound-below}, $\overline{u}_\epsilon \Delta \overline{u}_\epsilon $ is bounded below (independently of $\epsilon$) and, by Lemma \ref{lem:lim-mollified-2deriv}, converges pointwise almost everywhere to $\overline{u}\Delta\overline{u}$. Thus, Fatou's lemma implies
  \begin{align} \label{eq:2nd-deriv-convergence} \liminf_{\epsilon \rightarrow 0} \int_{\tilde{\Omega}_t}\overline{u}_{\epsilon} \Delta\overline{u}_{\epsilon} \, dx & \geq \int_{\tilde{\Omega}_t}\overline{u} \Delta \overline{u} \, dx .
  \end{align}
Substituting \eqref{eq:1st-deriv-ests} and \eqref{eq:2nd-deriv-convergence} into \eqref{eq:mollified-ip} and sending $\epsilon \rightarrow 0$ implies
  \begin{align*} \int_{\tilde{\Omega}_t}|\nabla\overline{u}|^2 \, dx &\leq C \sup_{\partial
\tilde{\Omega}_t}|\overline{u}| \mathcal{H}^{n-1}(\partial \tilde{\Omega}_t) - \int_{\tilde{\Omega}_t}\overline{u}
\Delta \overline{u} \, dx.
  \end{align*} Taking a sequence of domains $\tilde{\Omega}_t$ converging to $\Omega_t$
gives \eqref{eq:ip-ineq} since \(\overline{u}|_{\partial \Omega_t} \equiv 0\).
\end{proof}

\printbibliography
\end{document}